\newtheorem{theorem}{Theorem}[section]
\newtheorem{lemma}[theorem]{Lemma}
\newtheorem{proposition}[theorem]{Proposition}
\newtheorem{corollary}[theorem]{Corollary}
\newtheorem{example}[theorem]{Example}
\newtheorem{definition}[theorem]{Definition}
\def\N{\mathbb N}
\def\R{\mathbb R}
\def\C{\mathbb C}
\def\Z{\mathbb Z}
\title[]{
Spectral decomposition of power-bounded operators: The finite spectrum case
}
\author{
Shiho Oi
}
\address{
Department of Mathematics, Faculty of Science, 
Niigata University, Niigata 950-2181, Japan.
}
\email{shiho-oi@math.sc.niigata-u.ac.jp
}
\author{
Jyamira~Oppekepenguin
}
\address{
IKAKEN Niigata
}
\email{oppekepenguin@gmail.com
}
\keywords{
power-bounded operators, spectrum, spectral decompositions, 
quasi nilpotents, a theorem of Gelfand, a theorem of Sz.-Nagy, a theorem of Koehler and Rosenthal
}
\subjclass[2020]{
47A10,47B06, 46B04
}
\begin{document}


\begin{abstract}
In this paper, we investigate power-bounded operators, including surjective isometries, on Banach spaces. Koehler and Rosenthal asserted that an isolated point in the spectrum of a surjective isometry on a Banach space lies in the point spectrum, with the corresponding eigenspace having an invariant complement. However, they did not provide a detailed proof of this claim, at least as understood by the authors of this manuscript. Here, 
by applications of a theorem of Gelfand and the Riesz projections, we demonstrate that the theorem of Koehler and Rosenthal holds for any power-bounded operator on a Banach space. This not only furnishes a detailed proof of the theorem but also slightly generalizes its scope.
As a result, we establish that if $T: X \to X$ is a power-bounded operator on a Banach space $X$ whose spectrum consists of finitely many points ${\lambda_1, \lambda_2, \dots, \lambda_m}$, then for every $1 \leq i, j \leq m$, there exist projections $P_j$ on $X$ such that 
$P_iP_j=\delta_{ij}P_i$, $\sum_{j=1}^mP_j=I$, and $T=\Sigma_{j=1}^m \lambda_j P_j$. 
It follows that such an operator $T$ is an algebraic operator.
\end{abstract}

\maketitle
\section{Introduction}
Sz.-Nagy \cite{SzN} characterized a power-bounded operator on a  Hilbert space: an invertible bounded complex linear operator on a Hilbert space is power-bounded if and only if it is similar to a unitary operator. Consequently, a power-bounded operator on a Hilbert space admits a spectral decomposition by applying a spectral measure. What about the case of a Banach space operator?
The origin of the problem can be traced back to a theorem of Gelfand \cite{gelfand}, which characterizes the identity operator as a power-bounded operator with the spectrum $\{1\}$. Zem\'anek \cite{zemanek} 
provides a very informative and well-written account of developments related to this theorem. 
Koehler and Rosenthal \cite{KR} investigated isometries with an isolated spectrum. 
Botelho and Ili\v sevi\'c \cite{BI} studied isometries with finite spectrum. See also \cite{ILP,ILW}.
Berkson, Gillespie, and Muhly explored 
spectral decompositions of Banach space operators in a systematic manner in \cite{BGM}.

This paper studies a power-bounded operator on an arbitrary complex  Banach space with an isolated spectrum. 
In this paper, a power-bounded operator  $T$ on a Banach space is 
an invertible bounded operator which satisfies that 
\begin{equation}\label{spt}
    \sup_{n\in \Z}\|T^n\|<\infty,
    \end{equation}
where $\Z$ is the set of all integers. 
A surjective isometry is a power-bounded operator. 
A theorem of Koehler and Rosenthal \cite{KR} states that if a surjective isometry on a Banach space has an isolated point in the spectrum, then the isolated point is a point spectrum (eigenvalue) of which eigenspace has a complemented subspace. The proof in \cite{KR} asserts that "This follows from Stampfli's work",  
which refers to the reference \cite{stampfli}. 
However, while in \cite{stampfli} Stampfli primarily studied adjoint-abelian operators, which are generalizations of self-adjoint operators, there appears to be no exact statement or proof, except for Lemma 6 and its proof, that directly addresses iso-abelian operators.
It is worth noting that Koehler and Rosenthal demonstrated that an iso-abelian operator is equivalent to a surjective isometry \cite[Corollary 1]{KR}. Nevertheless, the authors of this paper could not find a corresponding proof of the aforementioned theorem of Koehler and Rosenthal in the existing literature. 
One of the primary objectives of this paper is to provide an exact proof of a slight generalization of this theorem,  
stated as Theorem \ref{KR}, which we derive by applying a theorem of Gelfand \cite{gelfand} and the Riesz projections. 
As a corollary, we exhibit the spectrum decomposition of a power-bounded operator with a finite spectrum in Corollary \ref{sd}. 
Then, by applying a theorem of Ili\v sevi\'c \cite{ilisevic} (Proposition \ref{ilis}), we see that such an operator is algebraic.

Throughout the paper, $X$ is a complex Banach space unless otherwise noted. The algebra of all bounded complex linear operators on $X$ is denoted by $B(X)$. The identity operator on $X$ is $I$. The spectrum of $T\in B(X)$ is $\sigma(T)$. A point spectrum (eigenvalue)  is an element $\lambda\in \sigma(T)$ such that there exists $0\ne x\in X$ with $T(x)=\lambda x$. If $\sigma(T)$ consists only of $0$, we say that $T$ is a quasi nilpotent.

\section{A theorem of Gelfand}
For the readers' convenience and for the completeness of the paper, we exhibit a proof of a theorem of Gelfand (Theorem \ref{gelfand} \cite[Satz 1]{gelfand}). 
Although Satz 1 in \cite{gelfand} seems to be stated for commutative Banach algebras, but it holds for non-commutative Banach algebras
as well.
To prove Theorem \ref{gelfand}, we apply the following version of the Liouville theorem, as in the same manner as in \cite{gelfand}.
\begin{lemma}\label{liouville}
Let $\chi\colon \C \to X$ be an entire map. 
Let $P(\theta)=\sum_{l=k}^mc_le^{il\theta}$ be a triangle polynomial with $c_k\ne 0$. Suppose that there exist real numbers $M>0$, $r_0>0$ and a non-negative integer $\alpha$ such that
\begin{equation}\label{res}
|P(\theta)|\|\chi(re^{i\theta})\|\le Mr^{\alpha}, \quad \forall r\ge r_0, \,\,\theta\in \R.
\end{equation}
Then $\chi(w)$ is a polynomial of $w$ with a degree at most $\alpha$. In particular, if $\alpha=0$, then $\chi(w)$ is a constant map.
\end{lemma}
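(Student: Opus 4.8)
The plan is to bound the Taylor coefficients of $\chi$. Write $\chi(w)=\sum_{n\ge 0}a_nw^n$, which converges on all of $\C$ since $\chi$ is entire; it suffices to show $a_n=0$ for every $n>\alpha$. The obstruction to a direct Cauchy estimate over the circle $|w|=r$ is that \eqref{res} only controls $\|\chi(re^{i\theta})\|$ after division by $|P(\theta)|$, and $1/|P(\theta)|$ need not be integrable near a zero of the trigonometric polynomial $P$. The remedy is to replace $P$ on the circle by a genuine (radius-dependent) algebraic polynomial.

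Concretely, on the circle $|w|=r$ one has $e^{i\theta}=w/r$, so $P(\theta)=\sum_{l=k}^m c_lr^{-l}w^l=r^{-m}p_r(w)$ where $p_r(w):=\sum_{l=k}^m c_lr^{m-l}w^l$ (a polynomial in $w$, nonzero since its $w^k$-coefficient is $c_kr^{m-k}\ne0$). Thus \eqref{res} says $\|p_r(w)\chi(w)\|\le Mr^{\alpha+m}$ for all $|w|=r$ and all $r\ge r_0$. Since $G_r:=p_r\chi$ is entire, applying Cauchy's formula for its $N$-th Taylor coefficient $b_N(r)$ over the circle of radius $r$ (whose length is $2\pi r$) gives
\[
\|b_N(r)\|\le Mr^{\alpha+m-N},\qquad r\ge r_0.
\]

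Next I would compute $b_N(r)$ explicitly. Expanding the product, $b_N(r)=\sum_{l=k}^{\min(m,N)}c_lr^{m-l}a_{N-l}$; for $N\ge k$ this is a polynomial in $r$ with coefficients in $X$ whose coefficient of the top power $r^{m-k}$ is exactly $c_ka_{N-k}$, all other terms carrying strictly smaller powers of $r$. Hence $r^{-(m-k)}b_N(r)\to c_ka_{N-k}$ as $r\to\infty$, while $r^{-(m-k)}\|b_N(r)\|\le Mr^{\alpha-(N-k)}$ for $r\ge r_0$. Choosing $N=j+k$ for a fixed $j>\alpha$ makes the exponent $\alpha-j$ negative, so the bound tends to $0$; therefore $c_ka_j=c_ka_{N-k}=0$, and $c_k\ne0$ forces $a_j=0$. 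As $j>\alpha$ was arbitrary and $\alpha$ is a non-negative integer, $\chi(w)=\sum_{n=0}^{\alpha}a_nw^n$ is a polynomial of degree at most $\alpha$; in particular it is constant when $\alpha=0$.

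The only genuinely non-routine step is the first reduction: trading the trigonometric polynomial $P$, whose zeros on the circle block a direct Cauchy estimate, for the algebraic polynomial $p_r$, and then taking the Cauchy contour to have radius exactly $r$ so that \eqref{res} applies on it verbatim. Everything after that is a short manipulation of Taylor coefficients; one does not even need the (true) fact that a nonzero trigonometric polynomial has only finitely many zeros per period.
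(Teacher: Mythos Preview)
Your proof is correct and is essentially the paper's argument in different packaging: the paper works directly with the Fourier-type integral $J(r,p)=r^{-p}(2\pi)^{-1}\int_0^{2\pi}P(\theta)\chi(re^{i\theta})e^{-i(k+p)\theta}\,d\theta$, bounds it by $Mr^{\alpha-p}$ using \eqref{res}, and expands it as $\sum_{t=k}^m c_t a_{p+k-t}r^{k-t}\to c_k a_p$. Unwinding your Cauchy integral on $|w|=r$ shows that your coefficient $b_N(r)$ equals $r^{m-k}J(r,N-k)$, so the two computations coincide line for line; your ``replace $P$ by the algebraic polynomial $p_r$'' step is just the observation that multiplying by $P(\theta)$ and integrating against $e^{-iN\theta}$ on the circle is the same as extracting a Taylor coefficient of $p_r\chi$.
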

\begin{proof}
We have a Taylar expansion
\[
\chi (w)=\sum_{s=0}^\infty a_sw^s.
\]
For $r>0$ and $p\in \N$, put
\[
J(r,p)=r^{-p}\frac{1}{2\pi}\int_0^{2\pi}P(\theta)\chi(re^{i\theta})e^{-(k+p)i\theta}d\theta.
\]
Suppose that $p\in \N$ is an arbitrary positive integer with $p>\alpha$. Letting $r\to \infty$ we have
\begin{equation}\label{1.6}
\|J(r,p)\|\le Mr^{-(p-\alpha)}\to 0
\end{equation}
by the inequality \eqref{res}. We have
\[
\frac{1}{2\pi}\int_0^{2\pi}e^{i(t+s-(k+p))\theta}
=
\begin{cases}
    1, & t+s=k+p \\
    0, &\text{otherwise},
\end{cases}
\]
hence we get
\begin{multline}
        J(r,p)=r^{-p}\frac{1}{2\pi}\int^{2\pi}_0\sum_{t=k}^mc_{t}e^{it\theta}\sum_{s=0}^\infty a_s r^s e^{is\theta}e^{-(k+p)i\theta} d\theta
       \\ =
        \sum_{t=k}^m\sum_{s=0}^\infty c_t a_s r^{s-p}
        \times 
        \begin{cases}
            1,& t+s=k+p
            \\
            0,& \text{otherwise}
        \end{cases}
        \\
        =\sum_{t=k}^mc_t a_{p+k-t}r^{k-t},
            \end{multline}
where $a_{p+k-t}=0$ for $p+k-t<0$. Since $k\le t\le m$ we have
\[
\sum_{t=k}^mc_{t}a_{p+k-t}r^{k-t}\to c_ka_p
\]
as $r\to \infty$. Since $c_k\ne0$ we have $a_p=0$ by \eqref{1.6}. This holds for every $p$ with $p>\alpha$. 
\end{proof}
\begin{lemma}\label{2.2}
    Let $a$ be an invertible element in a Banach algebra $B$ with unit $e$.
   Suppose that 
    \begin{equation*}\label{spt}
    \sup_{n\in \Z}\|a^n\|<\infty.
    \end{equation*}
    Then, every element in the spectrum of $a$ is unimodular. 
\end{lemma}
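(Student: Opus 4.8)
The plan is to prove, for each $\lambda\in\sigma(a)$, the two inequalities $|\lambda|\le 1$ and $|\lambda|\ge 1$ separately, obtaining the first by applying the hypothesis to $a$ itself and the second by applying it to $a^{-1}$. Write $M:=\sup_{n\in\Z}\|a^n\|<\infty$; note that since $a$ is invertible the element $a^{-1}=a^{-1}$ occurs in this family, so $a^{-1}$ is legitimately available and $\sup_{n\in\Z}\|(a^{-1})^n\|=\sup_{n\in\Z}\|a^{-n}\|=M$.

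First I would show $\sigma(a)\subseteq\{z\in\C:|z|\le 1\}$. If $\lambda\in\C$ with $|\lambda|>1$, the series $\sum_{n=0}^{\infty}\lambda^{-n-1}a^{n}$ converges absolutely in $B$, because $\|\lambda^{-n-1}a^{n}\|\le M|\lambda|^{-n-1}$ and $|\lambda|>1$. A telescoping computation gives $(\lambda e-a)\sum_{n=0}^{N}\lambda^{-n-1}a^{n}=e-\lambda^{-N-1}a^{N+1}$ (and the same on the other side), and $\|\lambda^{-N-1}a^{N+1}\|\le M|\lambda|^{-N-1}\to 0$, so $\lambda e-a$ is invertible and $\lambda\notin\sigma(a)$. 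Equivalently, this is just the observation that the spectral radius satisfies $r(a)=\lim_{n}\|a^{n}\|^{1/n}\le\lim_{n}M^{1/n}=1$. Hence $|\lambda|\le 1$ for all $\lambda\in\sigma(a)$.

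Next I would apply the very same step to $a^{-1}$, obtaining $\sigma(a^{-1})\subseteq\{z:|z|\le 1\}$. Since $0\notin\sigma(a)$, the spectral mapping theorem applied to the rational map $z\mapsto 1/z$ gives $\sigma(a^{-1})=\{\lambda^{-1}:\lambda\in\sigma(a)\}$, so $|\lambda^{-1}|\le 1$, i.e.\ $|\lambda|\ge 1$, for every $\lambda\in\sigma(a)$. Combining the two bounds yields $|\lambda|=1$, which is the assertion.

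I do not expect a genuine obstacle here: the only points needing care are the routine telescoping argument identifying the Neumann-type series with the inverse, and the appeal to the spectral mapping theorem for $z\mapsto 1/z$. If one prefers to avoid the latter, the same Neumann-series argument can be run directly with $a^{-1}$ in place of $a$: for $\lambda$ with $0<|\lambda|<1$ the series $-\sum_{n=0}^{\infty}\lambda^{n}a^{-n-1}$ converges (its $n$th term is bounded by $M|\lambda|^{n}$) and, again by telescoping, inverts $\lambda e-a$, ruling out such $\lambda$ from $\sigma(a)$ directly.
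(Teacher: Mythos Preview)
Your proposal is correct and essentially follows the paper's own proof: both argue via Neumann-type series that $\lambda e-a$ is invertible whenever $|\lambda|>1$ (using positive powers of $a$) and whenever $0<|\lambda|<1$ (using negative powers of $a$), together with $0\notin\sigma(a)$. Your detour through the spectral mapping theorem for $z\mapsto 1/z$ is a minor repackaging, and the alternative you give at the end is exactly the paper's direct computation.
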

\begin{proof}
    Suppose that $\mu\in \C$ with $0<|\mu|<1$. Then $\sum_{n=0}^\infty(\mu a)^n$ converges and 
    \begin{equation}\label{2.2.1}
    \sum_{n=0}^\infty(\mu a)^n=(e-\mu a)^{-1}
    \end{equation}
     by the condition \eqref{spt}. Hence $\frac{1}{\mu}\not\in \sigma(a)$. 

     Suppose that $\mu \in \C$ with $1<|\mu|$. Again by \eqref{spt}, we have
     \begin{equation}\label{2.2.2}
     \sum_{n=1}^\infty(\mu a)^{-n}=-(e-\mu a)^{-1}.
     \end{equation}
     Thus $\frac{1}{\mu}\not\in \sigma(a)$. 

     As $a$ is invertible, $0\not\in \sigma(a)$.

     It follows from the above we see that $\sigma(a)\subset \{z\mid |z|=1\}$.
\end{proof}
\begin{theorem}[Satz 1 in \cite{gelfand}]\label{gelfand}
    Let $a$ be an invertible element in a Banach algebra $B$ with unit $e$.
   Suppose that 
    \begin{equation}\label{spt1}
    \sup_{n\in \Z}\|a^n\|<\infty.
    \end{equation} 
    Suppose further that 
    the spectrum $\sigma(a)=\{\lambda\}$ is a singleton. Then 
    $a=\lambda e$.
\end{theorem}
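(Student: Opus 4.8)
The plan is to follow Gelfand's strategy: reduce to a quasinilpotent perturbation of the identity and then apply the Liouville‑type Lemma~\ref{liouville}. By Lemma~\ref{2.2} every point of $\sigma(a)$ is unimodular, so $|\lambda|=1$; replacing $a$ by $\lambda^{-1}a$ — which is again invertible with $\sup_{n\in\Z}\|(\lambda^{-1}a)^n\|=\sup_{n\in\Z}\|a^n\|<\infty$ — I may assume $\lambda=1$. Set $q=a-e$. Then $\sigma(q)=\sigma(a)-1=\{0\}$, so $q$ is quasinilpotent, and it suffices to prove $q=0$.

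The next step is to introduce the entire $B$‑valued function $\chi(w)=(e-wq)^{-1}=\sum_{n\ge 0}w^nq^n$; this is entire because $\sigma(wq)=\{0\}$ for every $w\in\C$, so $e-wq$ is always invertible. The point of this choice is the identity $e-wq=(1+w)e-wa$, which lets power‑boundedness enter. With $M:=\sup_{n\in\Z}\|a^n\|$: for $\re w>-\tfrac12$ one has $|w|<|1+w|$, so factoring out $1+w$ and expanding $(e-\tfrac{w}{1+w}a)^{-1}$ in a Neumann series in $a$ gives $\|\chi(w)\|\le M(|1+w|+|w|)/(1+2\re w)\le M(1+2|w|)/(1+2\re w)$; symmetrically, for $\re w<-\tfrac12$ one has $|1+w|<|w|$, and factoring out $-wa$ and expanding $(e-\tfrac{1+w}{w}a^{-1})^{-1}$ in powers of $a^{-1}$ gives $\|\chi(w)\|\le M^2(1+2|w|)/|1+2\re w|$. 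In polar form, $\|\chi(re^{i\theta})\|\le C(1+2r)/|1+2r\cos\theta|$ whenever $\cos\theta\ne-\tfrac1{2r}$; in particular $\|\chi(re^{i\theta})\|\le C(1+2r)$ for all directions with $r\cos\theta\ge 0$ or $r\cos\theta\le -1$, i.e.\ outside a strip of bounded width.

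I would then apply Lemma~\ref{liouville} with the triangle polynomial $P(\theta)=\cos\theta=\tfrac12e^{i\theta}+\tfrac12e^{-i\theta}$ (so $k=-1$, $m=1$, $c_{-1}=\tfrac12\ne 0$): it vanishes exactly in the two exceptional directions $\theta=\pm\pi/2$ along which the denominator degenerates, and one verifies $|\cos\theta|\,\|\chi(re^{i\theta})\|\le M'r^{\alpha}$ for suitable fixed $M'$, $r_0$, $\alpha$. Lemma~\ref{liouville} then forces $\chi$ to be a polynomial in $w$ of degree at most $\alpha$; comparing with $\chi(w)=\sum_{n\ge0}w^nq^n$ yields $q^{\alpha+1}=0$, so $q$ is nilpotent, say $q^N=0$. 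Finally power‑boundedness is used once more: $a^n=(e+q)^n=\sum_{k=0}^{N-1}\binom nk q^k$ lies in the finite‑dimensional subspace $\spnn\{e,q,\dots,q^{N-1}\}$ of $B$, on which all norms are equivalent, and the coefficient of $q$ in $a^n$ is $n$; hence $\|a^n\|\ge c\,n\to\infty$ unless $q=0$. Therefore $q=0$, i.e.\ $a=e$, and undoing the reduction $a=\lambda e$.

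The main obstacle is passing from the pointwise estimate to the hypothesis \eqref{res} of Lemma~\ref{liouville}. In the exceptional directions $\theta=\pm\pi/2$ — precisely, on the line $\re w=-\tfrac12$ — the spectrum of $a$ touches the unit circle, both Neumann expansions break down simultaneously, and the naive bound $C(1+2r)/|1+2r\cos\theta|$ blows up there; power‑boundedness alone does not control $\|\chi\|$ on that line. Closing this gap requires genuinely more: one has to exploit the quasinilpotency of $q$ (which forces $\chi$ to grow slowly across that strip) together with the estimates away from the line, e.g.\ via a Phragmén–Lindelöf argument on the offending strip, to obtain a bound of the polynomial form \eqref{res}. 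This is the technical heart of the argument and the place where the degree $\alpha$ in Lemma~\ref{liouville} is produced. (A logically cleaner but different route avoids Lemma~\ref{liouville} altogether: put $b=\log a$, well defined by the holomorphic functional calculus since $\sigma(a)=\{1\}$; then $b$ is quasinilpotent, $a^n=e^{nb}$, so $z\mapsto e^{zb}$ is an entire function of exponential type $0$ bounded on $\R$, hence constant, whence $b=0$ and $a=e$.)
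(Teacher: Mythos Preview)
Your approach is the paper's approach, but you are missing the one trick that makes it close. You set $\chi(w)=(e-wq)^{-1}$ and correctly observe that the Neumann estimates break down on the line $\re w=-\tfrac12$; in polar coordinates that line meets the circle $|w|=r$ at $\cos\theta=-\tfrac{1}{2r}$, and multiplying by $P(\theta)=\cos\theta$ does \emph{not} kill the singularity, because the zero of $P$ sits at $\cos\theta=0$, not at $\cos\theta=-\tfrac{1}{2r}$. Consequently $|\cos\theta|\cdot C(1+2r)/|1+2r\cos\theta|$ admits no bound of the form $M'r^{\alpha}$ uniformly in $\theta$, Lemma~\ref{liouville} cannot be invoked as stated, and your proposed patches (Phragm\'en--Lindel\"of on the offending strip, or abandoning the whole setup for $\log a$) are detours rather than completions of this argument.

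The paper's fix is a half-shift that aligns the singular line with the zero of $P$. Take $\chi(w)=\bigl(e+(w+\tfrac12)N\bigr)^{-1}$ with $N=\bar\lambda a-e$. Writing $\mu=(w+\tfrac12)/(w-\tfrac12)$ one checks $\chi(w)=-(w-\tfrac12)^{-1}(e-\mu\bar\lambda a)^{-1}$, and the Neumann bound $\|(e-\mu\bar\lambda a)^{-1}\|\le M/\bigl||\mu|-1\bigr|$ yields, for $r>1$ and $\theta\ne\tfrac\pi2+m\pi$,
\[
\|\chi(re^{i\theta})\|\ \le\ \frac{M\bigl(|w+\tfrac12|+|w-\tfrac12|\bigr)}{\bigl|\,|w+\tfrac12|^{2}-|w-\tfrac12|^{2}\,\bigr|}\ =\ \frac{M\bigl(|w+\tfrac12|+|w-\tfrac12|\bigr)}{2r|\cos\theta|}\ \le\ \frac{3M}{2|\cos\theta|}.
\]
Now the singular line is exactly $\re w=0$, i.e.\ $\cos\theta=0$, so the factor $P(\theta)=2\cos\theta$ cancels it \emph{exactly}: $|P(\theta)|\,\|\chi(re^{i\theta})\|\le 3M$ with $\alpha=0$, and continuity in $\theta$ handles the two exceptional directions. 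Lemma~\ref{liouville} then makes $\chi$ constant, hence $N=0$ at once---no nilpotency step, no Phragm\'en--Lindel\"of.
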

\begin{proof}
    By Lemma \ref{2.2} we have $|\lambda|=1$. Put $N=\bar{\lambda}a-e$. Then $\sigma(N)=\{0\}$, that is, $N$ is a quasi nilpotent,  as $\sigma(a)=\{\lambda\}$ and $|\lambda|=1$. We prove $N=0$. To prove it, let
    \[
    \chi(w)=\left(e+\left(w+\frac12\right)N\right)^{-1}, \quad w\in \C,
    \]
    which is well defined since $e+zN$ is invertible for every $z\in \C$ as $\sigma(e+zN)=\{1\}$. Let $w\in \C$ be as $|w|>1$. Then $w=re^{i\theta}$, where $\theta$ is the argument of $w$ and $r=|w|$. Put $P(\theta)=e^{-i\theta}+e^{i\theta}$. Letting $M=\sup_{n\in \Z}\|a^n\|$, we show
    \[
    \|P(\theta)\chi(re^{i\theta})\|\le 3M
    \]
    for every $r>1$ and real number $\theta$.
    It will follow by Lemma \ref{liouville} that  $\chi$ is a constant map, then $N=0$ as desired. In the following, $w=re^{i\theta}$ for $r>1$ and a real number $\theta$. Put 
    \[
    \mu=\frac{w+\frac12}{w-\frac12}.
    \]
    By calculation, we have
\begin{multline}\label{wl}
    -\left(w-\frac12\right)\left(e+\left(w+\frac12\right)N\right)^{-1}(e-\mu \bar{\lambda}a)
    \\
    =
    -\left(w-\frac12\right)\left(e+\left(w+\frac12\right)N\right)^{-1}
    \\
    +\left(w-\frac12\right)\left(e+\left(w+\frac12\right)N\right)^{-1}\mu \bar{\lambda}a
    \\
    =    \left(e+\left(w+\frac12\right)N\right)^{-1}+\left(e+\left(w+\frac12\right)N\right)^{-1}\left(\left(w+\frac12\right)N\right)
    \\
    =    \left(e+\left(w+\frac12\right)N\right)^{-1}\left(e+\left(w+\frac12\right)N\right)=e.
\end{multline}
If $\theta\ne \frac{\pi}{2}+m\pi$ for any integer $m$, then we have $|\mu\bar{\lambda}|\ne 1$. Applying similar calculation as equations \eqref{2.2.1} and \eqref{2.2.2} we have 
\[
\|(e-\mu \bar{\lambda}a)^{-1}\|\le \frac{M}{||\mu|-1|}.
\]
Hence by \eqref{wl} we have
\begin{equation}\label{12345}
\begin{split}
\|\chi(re^{i\theta})\|&=\left\|\left(e+\left(w+\frac12\right)N\right)^{-1}\right\|\\
    &=\left\|\frac{(e-\mu \bar{\lambda}a)^{-1}}{w-\frac12}\right\|
    \le
    \frac{M\left(\left|w+\frac12\right|+\left|w-\frac12\right|\right)}{\left|\left|w+\frac12\right|^2-\left|w-\frac12\right|^2\right|}.
    \end{split}
\end{equation}
As $r>1$ we have
\[
\left|\left|w+\frac12\right|^2-\left|w-\frac12\right|^2\right|
=2r|\cos \theta|,\quad 
\left|w+\frac12\right|+\left|w-\frac12\right|\le 3r.
\]
It follows by 
\eqref{12345} we have
\begin{equation}\label{hutous}
\|P(\theta)\chi(re^{i\theta})\|=|P(\theta)|\|\chi(re^{i\theta})\|
\le
3M
\end{equation}
if $\theta\ne \frac{\pi}{2}+m\pi$ for any integer $m$. 
On the other hand, for any $r>1$, the function $P(\theta)\chi(re^{i\theta})$ is continuous with respect to $\theta$. Thus \eqref{hutous} also holds for $\theta= \frac{\pi}{2}+m\pi$ for an integer $m$. 
It follows by Lemma \ref{liouville} that $\chi(w)=(e+(w+\frac12)N)^{-1}$ is a constant map as is desired.

\end{proof}
\begin{example}
    Let $T\in B(X)$ be such that $T=VSV^{-1}$, where $S\in B(X)$ is a surjective isometry and $V\in B(X)$ is an invertible operator. 
    Then $T$ is power-bounded. A celebrated theorem of Sz.-Nagy \cite{SzN} states that the converse holds if $X$ is a Hilbert space. 
\end{example}

\section{A proof of a theorem of Koehler and Rosenthal}
We say that $P\in B(X)$ is a projection if $P^2=P$. We do not assume a further property, such as the self-adjointness, even if $X$ is a Hilbert space. We recall the definition of the Riesz projection.
\begin{definition}
    Let $T\in B(X)$. Suppose that $\sigma(T)$ is the disjoint union of two 
    closed subsets $\sigma$ and $\tau$. 
    Suppose that $\Gamma$ is a Cauchy contour in the resolvent set of $T$ around $\sigma$ separating $\sigma$ from $\tau$. The operator
    \[
    P_\sigma=\frac{1}{2\pi i}\int_\Gamma(T-wI)^{-1}dw
    \]
    is called the Riesz projection of $T$ corresponding to $\sigma$.
\end{definition}
Note that the Riesz projection does not depend on the choice of the Cauchy contour around $\sigma$ separating $\sigma$ from $\tau$. 
The Riesz projection is, in fact, a projection \cite[Lemma 2.1]{GGK}. 
Please refer to \cite{GGK} for further properties of the Riesz projection.

The following is a generalization of Theorem 5 in \cite{KR}, which is stated for isometries. 
\begin{theorem}\label{KR}
    Let $X$ be a complex Banach space and $T\in B(X)$ an invertible operator. Suppose that $T$ satisfies that 
    \begin{equation}\label{spt2}
    \sup_{n\in \Z}\|T^n\|<\infty.
    \end{equation}
    Suppose that $\lambda\in \sigma(T)$ is an isolated point in $\sigma(T)$. 
    Then $\lambda$ is a point spectrum of $T$, and the eigenspace of $T$ corresponding to $\lambda$ is $P_\lambda(X)$ and it has an invariant complement $\operatorname{ker}P_\lambda$, where $P_\lambda$ is the Riesz projection of $T$ corresponding to $\lambda$. 
\end{theorem}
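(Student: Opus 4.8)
The plan is to reduce everything to Gelfand's theorem (Theorem \ref{gelfand}) via the Riesz projection. Since $\lambda$ is an isolated point of $\sigma(T)$, the set $\tau:=\sigma(T)\setminus\{\lambda\}$ is closed, and $\sigma(T)$ is the disjoint union of the closed sets $\{\lambda\}$ and $\tau$; let $P=P_\lambda$ be the Riesz projection of $T$ corresponding to $\{\lambda\}$. First I would record the standard properties of this projection (see \cite{GGK}): $P$ is a bounded projection commuting with $T$, one has the direct sum decomposition $X=P(X)\oplus\ker P$ into closed subspaces both invariant under $T$, and $\sigma(T|_{P(X)})=\{\lambda\}$ while $\sigma(T|_{\ker P})=\tau$. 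The crucial additional point is that $P\ne 0$ precisely because $\lambda\in\sigma(T)$, so $P(X)\ne\{0\}$; this is also part of the standard theory.

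The key step is to apply Theorem \ref{gelfand} to the restriction $T|_{P(X)}$. Being a closed subspace of $X$, $P(X)$ is itself a complex Banach space. Since $P$ commutes with $T$, it also commutes with $T^{-1}$, so $P(X)$ is invariant under $T^{-1}$ as well, and $T|_{P(X)}$ is an invertible operator on $P(X)$ with inverse $T^{-1}|_{P(X)}$. Restriction to an invariant subspace does not increase the operator norm, and $(T|_{P(X)})^n=(T^n)|_{P(X)}$ for all $n\in\Z$, hence $\sup_{n\in\Z}\|(T|_{P(X)})^n\|\le\sup_{n\in\Z}\|T^n\|<\infty$; that is, $T|_{P(X)}$ is power-bounded. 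Since $\sigma(T|_{P(X)})=\{\lambda\}$ is a singleton, Theorem \ref{gelfand} gives $T|_{P(X)}=\lambda\, I_{P(X)}$. Because $P(X)\ne\{0\}$, this already shows that $\lambda$ is a point spectrum of $T$ and that $P_\lambda(X)\subseteq\ker(T-\lambda I)$.

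For the reverse inclusion I would take any $x$ with $Tx=\lambda x$ and decompose $x=Px+(I-P)x$. Since $Px\in P(X)$, the previous step gives $T(Px)=\lambda Px$, and subtracting yields $T((I-P)x)=\lambda(I-P)x$ with $(I-P)x\in\ker P$. But $\sigma(T|_{\ker P})=\tau$ does not contain $\lambda$, so $\lambda$ is not an eigenvalue of $T|_{\ker P}$, which forces $(I-P)x=0$, i.e. $x=Px\in P(X)$. Therefore $\ker(T-\lambda I)=P_\lambda(X)$. Finally, $\ker P_\lambda$ is a closed complement of $P_\lambda(X)$ by the direct sum decomposition, and it is invariant under $T$ because $T$ commutes with $P_\lambda$; this gives the invariant complement and completes the argument.

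I do not expect a genuinely hard analytic obstacle here once Theorem \ref{gelfand} is available: the only points requiring care are the bookkeeping ones, namely that the restriction $T|_{P(X)}$ remains invertible and power-bounded (which hinges on $P$ commuting with both $T$ and $T^{-1}$) and that one correctly invokes the standard spectral behaviour of the Riesz projection, in particular the nonvanishing $P_\lambda\ne 0$ and the splitting of the spectrum $\sigma(T|_{P(X)})=\{\lambda\}$, $\sigma(T|_{\ker P})=\tau$.
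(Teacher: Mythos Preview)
Your proposal is correct and follows essentially the same route as the paper's proof: both take the Riesz projection $P_\lambda$, use the standard decomposition $X=P_\lambda(X)\oplus\ker P_\lambda$ with $\sigma(T|_{P_\lambda(X)})=\{\lambda\}$ and $\sigma(T|_{\ker P_\lambda})=\sigma(T)\setminus\{\lambda\}$, apply Gelfand's theorem to the power-bounded restriction $T|_{P_\lambda(X)}$ to obtain $T|_{P_\lambda(X)}=\lambda I$, and then verify the reverse inclusion by decomposing an arbitrary eigenvector and arguing that its $\ker P_\lambda$-component must vanish since $\lambda\notin\sigma(T|_{\ker P_\lambda})$. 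The only cosmetic difference is that the paper first records $|\lambda|=1$ via Lemma~\ref{2.2} and justifies $P_\lambda(X)\ne\{0\}$ by the same spectral-splitting contradiction you allude to as ``standard theory''.
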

\begin{proof}
    By Lemma \ref{2.2} we have $|\lambda|=1$. 
    Let $P_\lambda$ be the Riesz projection 
    \[
    P_\lambda=\frac{1}{2\pi i}\int_\Gamma (T-w I)^{-1}dw,
    \]
    where $\Gamma$ is a Cauchy
contour (in the resolvent set of $T$) around $\lambda$ separating $\lambda$ from $\sigma(T)\setminus\{\lambda\}$ (see \cite{GGK}). 
Applying Theorem 2.2 in \cite{GGK}, we have
$M_\lambda\oplus L_\lambda=X$, where $M_\lambda=P_\lambda(X)$ and $L_\lambda=\operatorname{ker} P_\lambda$, $M_\lambda$ and $L_\lambda$ are $T$ invariant subspaces, and $\sigma(T|M_\lambda)=\{\lambda\}$ and $\sigma(T|L_\lambda)=\sigma(T)\setminus\{\lambda\}$. Note that $M_\lambda\ne \{0\}$. (Suppose that $M_\lambda=\{0\}$. Then $L_\lambda=X$, hence $\sigma(T|L_\lambda)=\sigma(T)$, which is a contradiction since $\sigma(T|L_\lambda)=\sigma(T)\setminus\{\lambda\}$.)  We have $T|M_\lambda\in B(M_\lambda)$ such that $\sigma(T|M_\lambda)=\{\lambda\}$. As $\|(T|M_\lambda)^n\|\le \|T^n\|$ for every integer $n$, we have $\sup_{n\in \Z}\|(T|M_\lambda)^n\|<\infty$. Applying Theorem \ref{gelfand} for $T|M_\lambda$ we have $T|M_\lambda=\lambda I_{M_\lambda}$, so that $T(x)=\lambda x$ for every $x\in M_\lambda$.  
Note that $M_\lambda$ is the eigenspace of $T$ for $\lambda$. 
The reason is as follows. 
As $T|M_\lambda =\lambda I_{M_\lambda}$, $M_\lambda$ is a subspace of the eigenspace for $\lambda$. We prove the converse. 
Suppose that $0\ne y\in X$ satisfies $T(y)=\lambda y$: $y$ is an eigenvector for $\lambda$. As $X=M_\lambda\oplus L_\lambda$, $y=y_M+y_L$ with $y_M\in M_\lambda$ and $y_L\in L_\lambda$. Then 
\[
\lambda y_M+\lambda y_L=\lambda y=T(y)=T(y_M)+T(y_L)=\lambda y_M+T(y_L),
\]
so
\[
\lambda y_L=T(y_L)=T|L_\lambda(y_L).
\]
Suppose that $y_L\ne 0$. Then $\lambda\in \sigma(T|L_\lambda)$, which is a contradiction since $\sigma(T|L_\lambda)=\sigma(T)\setminus\{\lambda\}$. It follows that $y_L=0$, hence $y=y_M\in M_\lambda$.
\end{proof}
The following is a generalization of a theorem of Gelfand \cite{gelfand}.
\begin{corollary}\label{sd}
    Suppose that $T\in B(X)$ is an invertible operator such that the spectrum $\sigma(T)$ is a finite set $\{\lambda_1, \lambda_2, \dots, \lambda_m\}$. 
    Then $T$ satisfies the condition  
    \begin{equation}\label{spt2}
    \sup_{n\in \Z}\|T^n\|<\infty
    \end{equation}
    if and only if $|\lambda_j|=1$ and there exists a projection $P_j$ for every $1\le j\le m$ such that 
    $T=\sum_{j=1}^m\lambda_jP_j$, which satisfies
    the following {\rm{(i)}} and {\rm{(ii)}}. 
    \begin{itemize}
        \item[(i)] The image $P_j(X)$ is the eigenspace of $T$ corresponding to $\lambda_j$ for every $1\le j\le m$. 
        \item[(ii)] $P_iP_j=0$ for every pair $(i,j)$ with $i\ne j$ and $I=\sum_{j=1}^mP_j$.
        
    \end{itemize}
    In this case, 
    the projection $P_j$ is unique in the sense that it is the Riesz projection of $T$ corresponding to the spectrum $\lambda_j$ for every $1\le j\le m$. 
    Hence, every eigenspace $P_j(X)$ and $\operatorname{ker}P_j$ are $T$-invariant subspaces such that $P_j(X)\oplus \operatorname{ker}P_j=X$.
    
\end{corollary}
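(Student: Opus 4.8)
The plan is to prove the two implications of the equivalence separately, then deduce uniqueness, and finally read off the last sentence from Theorem~\ref{KR}; only the implication ``power-bounded $\Rightarrow$ spectral decomposition'' requires real work. Assume first that $T$ satisfies \eqref{spt2}. By Lemma~\ref{2.2} we have $|\lambda_j|=1$ for every $j$. Since $\sigma(T)$ is finite, each $\lambda_j$ is an isolated point of $\sigma(T)$, so Theorem~\ref{KR} applies at every $\lambda_j$: with $P_j$ the Riesz projection of $T$ at $\lambda_j$, the subspace $P_j(X)$ is the $\lambda_j$-eigenspace, $\ker P_j$ is a $T$-invariant complement, and $T$ acts as $\lambda_j I$ on $P_j(X)$. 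Because the singletons $\{\lambda_1\},\dots,\{\lambda_m\}$ form a partition of $\sigma(T)$ into disjoint closed sets, the corresponding Riesz projections are mutually orthogonal and sum to the identity, i.e.\ $P_iP_j=\delta_{ij}P_i$ and $\sum_{j=1}^m P_j=I$; this is a standard property of the Riesz/holomorphic functional calculus (see \cite{GGK}). Hence $T=TI=\sum_{j=1}^m TP_j=\sum_{j=1}^m\lambda_jP_j$, since $TP_jx=T(P_jx)=\lambda_jP_jx$ for all $x$. This gives (i), (ii) and the formula $T=\sum_j\lambda_jP_j$. (If one prefers an argument using only results proved above, the same conclusion follows by induction on $m$: the case $m=1$ is Theorem~\ref{gelfand}; for the step, apply Theorem~\ref{KR} at $\lambda_m$ to split $X=M_m\oplus L_m$ into $T$-invariant parts with $T|_{M_m}=\lambda_m I$ and $\sigma(T|_{L_m})=\{\lambda_1,\dots,\lambda_{m-1}\}$, check that $T|_{L_m}$ is invertible and that $\sup_{n\in\Z}\|(T|_{L_m})^n\|\le\sup_{n\in\Z}\|T^n\|<\infty$, apply the inductive hypothesis on $L_m$, and transport the projections back to $X$ together with the projection onto $M_m$ along $L_m$.)

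For the reverse implication, suppose $T=\sum_j\lambda_jP_j$ with $|\lambda_j|=1$, $P_iP_j=\delta_{ij}P_i$ and $\sum_jP_j=I$. Then $\sum_j\overline{\lambda_j}P_j$ is a two-sided inverse of $T$, and more generally $T^n=\sum_j\lambda_j^nP_j$ for every $n\in\Z$, so $\|T^n\|\le\sum_{j=1}^m\|P_j\|$ uniformly in $n$, which is \eqref{spt2}.

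For uniqueness, let $P_1,\dots,P_m$ be any family of projections with $T=\sum_j\lambda_jP_j$, $P_iP_j=\delta_{ij}P_i$ and $\sum_jP_j=I$. Expanding $\prod_{k\ne i}(T-\lambda_kI)$ and using orthogonality collapses the product to $\prod_{k\ne i}(\lambda_i-\lambda_k)P_i$, whence $P_i=p_i(T)$, where $p_i$ is the Lagrange interpolating polynomial with $p_i(\lambda_k)=\delta_{ik}$. Thus $P_i$ is uniquely determined by $T$; since the Riesz projection of $T$ at $\lambda_i$ also satisfies these hypotheses by the implication just proved, it must equal $P_i$. The final sentence of the statement --- that $P_j(X)$ and $\ker P_j$ are $T$-invariant with $P_j(X)\oplus\ker P_j=X$ --- is then exactly the content of Theorem~\ref{KR} at the isolated point $\lambda_j$.

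I expect the only genuine difficulty to be organizational rather than conceptual. In the self-contained inductive route one must verify carefully that the restriction $T|_{L_m}$ really does satisfy the hypotheses on the invariant subspace $L_m$ (invertibility there, and two-sided power-boundedness, which needs $L_m$ to be $T^{-1}$-invariant), and that the $\lambda_j$-eigenspace of $T|_{L_m}$ for $j<m$ coincides with the $\lambda_j$-eigenspace of $T$ --- the latter forcing the $M_m$-component of a $\lambda_j$-eigenvector to vanish uses $\lambda_j\ne\lambda_m$. In the functional-calculus route the entire matter reduces to the standard mutual orthogonality and completeness of the Riesz projections associated with a finite partition of the spectrum, so no analytic obstacle beyond Theorems~\ref{gelfand} and~\ref{KR} remains.
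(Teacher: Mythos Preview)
Your proof is correct and follows essentially the same route as the paper: both use the Riesz projections $Q_j$ together with Theorem~\ref{KR} for the forward implication, and the direct computation $T^n=\sum_j\lambda_j^nP_j$ for the reverse. The paper, rather than citing orthogonality and completeness of the $Q_j$ as standard, spells it out (showing $\sum_jQ_j=I$ by contour addition and deducing $Q_iQ_j=0$ from the fact that $Q_i+Q_j=Q_{\{\lambda_i,\lambda_j\}}$ is again idempotent). The one substantive difference is the uniqueness step: the paper uses condition~(i), noting that $P_j(X)=Q_j(X)$ forces $P_iQ_j=\delta_{ij}Q_j$ and then computing $P_iT$ in two ways to get $\lambda_iP_i=\lambda_iQ_i$; your Lagrange-interpolation argument $P_i=\prod_{k\ne i}(T-\lambda_kI)\big/\prod_{k\ne i}(\lambda_i-\lambda_k)$ is exactly Proposition~\ref{ilis} and is arguably cleaner, since it uses only~(ii) and the formula $T=\sum_j\lambda_jP_j$.
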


\begin{proof}
Suppose that $|\lambda_j|=1$ and $T=\sum_{j=1}^m\lambda_jP_j$ for a projection $P_j$ with the conditions (i) and  (ii).
As $P_j$ is a projection and $P_jP_i=0$ for every pair $(i,j)$ with $i\ne j$, we have $T^n=\sum_{j=1}^m{\lambda_j}^nP_j$ for every positive integer $n$. We also have $T^{-1}=\sum_{j=1}^m{\lambda_j}^{-1}P_j$, so $T^{-n}=\sum_{j=1}^m{\lambda_j}^{-n}P_j$ for every positive integer $n$. As $|\lambda_j|=1$  for every $j$, we have 
$\sup_{n\in \Z}\|T^n\|\le \sum_{j=1}^m\|P_j\|<\infty$, which assures \eqref{spt2}.

Suppose conversely that \eqref{spt2} holds. By Theorem \ref{KR} we have $|\lambda_j|=1$ for every $1\le j\le m$. 
Let $Q_j$ denote the Riesz projection of $T$ corresponding to $\lambda_j$ for $1\le j\le m$. Due to Theorem \ref{KR} $Q_j(X)$ is the eigenspace of $T$ corresponding to $\lambda_j$ for $1\le j\le m$. Thus $Q_j$'s satisfy the condition (i). 
We prove that $T=\sum_{j=1}^m\lambda_jQ_j$ and 
$Q_j$ for $1\le j\le m$ satisfies (ii). 
We have
\begin{equation}\label{sumQj}
\sum_{j=1}^m Q_j=I.
\end{equation}
In fact, letting $\Gamma_j$ be a Cauchy contour (in the resolvent set of $T$) around $\lambda_j$ separating $\lambda_j$ from $\sigma(T)\setminus \{\lambda_j\}$, and $\Gamma$ a Cauchy contour around $\sigma(T)$, we have
\[
\sum_{j=1}^mQ_j=\sum_{j=1}^m\frac{1}{2\pi i}\int_{\Gamma_j}(T-wI)^{-1}dw=\frac{1}{2\pi i}\int_{\Gamma}(T-wI)^{-1}dw=I.
\]
It follows that $+_{j=1}^mQ_j(X)=X$. 
As $Q_j(X)$ is the eigenspace for $\lambda_j$, we have $Q_i(X)\cap Q_j(X)=\{0\}$ for $i \neq j$.  This implies that $\oplus_{j=1}^mQ_j(X)=X$. 
For every pair $i$ and $j$ for any $1\le i,j\le m$ with $i\ne j$, we have $Q_iQ_j=0$. (The reason is as follows. Due to the definition of the Riesz projection, we have $Q_{\{\lambda_i,\lambda_j\}}=Q_i+Q_j$, where 
$Q_{\{\lambda_i,\lambda_j\}}$ is the Riesz projection of $T$ corresponding to $\{\lambda_i,\lambda_j\}$. Thus $Q_i+Q_j$ is a projection. Since 
\[
Q_i+Q_j=(Q_i+Q_j)^2=Q_i+Q_j+Q_iQ_j+Q_jQ_i,
\]
we have $Q_iQ_j+Q_jQ_i=0$. Multiplying $Q_i$ for the equality from the left, we get 
\[
Q_iQ_j+Q_iQ_jQ_i=0
\]
as $Q_i^2=Q_i$. Thus 
\[
Q_iQ_j(1+Q_i)=0.
\]
As $\sigma(Q_i)\subset \{0,1\}$, $1+Q_i$ is invertible, we have $Q_iQ_j=0$. ) Thus $Q_j$ ($1\le j\le m$) satisfies (ii).

Suppose that $P_j$ ($1\le j\le m$) is a projection such that $T=\sum_{j=1}^m\lambda_jP_j$ which satisfies the conditions (i) and (ii). We prove $P_j=Q_j$ for every $1\le j\le m$. 
First we point out that the eigenspace of $T$ for $\lambda_j$ is $P_j(X)=Q_j(X)$. For any $x\in X$, $Q_j(x)\in Q_j(X)=P_j(X)$, hence $P_jQ_j(x)=Q_j(x)$. Thus $P_iQ_j(x)=P_iP_jQ_j(x)=0$ for every $x\in X$, which ensures $P_iQ_j=0$ for every pair $(i,j)$ with $i\ne j$. 
Similarly, we have $P_iQ_i=Q_i$ for every $1\le i\le m$. 
It follows that for every $1\le i\le m$ and $x\in X$ we have
\[
P_iT(x)=\sum_{j=1}^m\lambda_jP_iP_j(x)=\lambda_iP_i(x), 
\]
and 
\[
P_iT(x)=\sum_{j=1}^m\lambda_jP_iQ_j(x)=\lambda_iQ_i(x).
\]
As $\lambda_i\ne0$ we conclude that $P_i(x)=Q_i(x)$ for every $1\le i\le m$ and $x\in X$.

As $P_j=Q_j$ is the Riesz projection, Theorem \ref{KR} asserts that $P_j(X)$ and $\operatorname{ker}P_j$ are $T$ invariant and $P_j(X)\oplus \operatorname{ker}P_j=X$. 
\end{proof}
Botelho and Ili\v sevi\'c \cite{BI} studied isometries with finite spectrum. 
We note that the following is proved by Ili\v sevi\'c \cite[Proposition 2.4]{ilisevic}; see also \cite[Proposiiton 1.2]{ILP}.
\begin{proposition}\label{ilis}
    Suppose that $T,P_1,P_2,\dots,P_m\in B(X)$ and $\lambda_1,\lambda_2,\dots,\lambda_m$ are distinct complex numbers. 
    The following conditions {\rm{(i)}} and {\rm{(ii)}} are equivalent.
\begin{itemize}
    \item[(i)] $T=\sum_{j=1}^m\lambda_jP_j$; $P_1,P_2,\dots P_m$ are projections such that $I=\sum_{j=1}^mP_j$ and $P_iP_j=0$ for every pair $(i,j)$ with $i\ne j$.  
    \item[(ii)] $\prod_{j=1}^m(T-\lambda_jI)=0$ and 
    \[
    P_j=\frac{\prod_{i\ne j}(T-\lambda_iI)}{\prod_{i\ne j}(\lambda_j-\lambda_i)},\qquad j=1,\dots, m.
    \]
\end{itemize}
\end{proposition}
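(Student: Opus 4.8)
The plan is to derive both implications by purely algebraic manipulations inside the commutative subalgebra of $B(X)$ generated by $T$ and $I$, the essential ingredient being Lagrange interpolation.

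For (i)~$\Rightarrow$~(ii), I would first use $I=\sum_{j=1}^m P_j$ to write, for each $k$,
\[
T-\lambda_k I=\sum_{j=1}^m(\lambda_j-\lambda_k)P_j,
\]
and then, exploiting $P_iP_j=\delta_{ij}P_i$, prove by induction on $|S|$ that for every $S\subseteq\{1,\dots,m\}$
\[
\prod_{k\in S}(T-\lambda_k I)=\sum_{j=1}^m\Bigl(\prod_{k\in S}(\lambda_j-\lambda_k)\Bigr)P_j.
\]
Taking $S=\{1,\dots,m\}$ makes every coefficient of $P_j$ contain the factor $\lambda_j-\lambda_j$, so $\prod_{j=1}^m(T-\lambda_j I)=0$; taking $S=\{1,\dots,m\}\setminus\{j\}$ kills the coefficient of $P_l$ for $l\neq j$ and leaves $\prod_{i\neq j}(\lambda_j-\lambda_i)\neq 0$ for $l=j$, so $\prod_{i\neq j}(T-\lambda_i I)=\bigl(\prod_{i\neq j}(\lambda_j-\lambda_i)\bigr)P_j$, which is the claimed formula after dividing by the nonzero scalar.

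For (ii)~$\Rightarrow$~(i), set $c_j=\prod_{i\neq j}(\lambda_j-\lambda_i)$, which is nonzero because the $\lambda_j$ are distinct, and let $\ell_j$ be the Lagrange basis polynomial $\ell_j(z)=c_j^{-1}\prod_{i\neq j}(z-\lambda_i)$, so that the hypothesis reads $P_j=\ell_j(T)$. Since $\sum_{j=1}^m\ell_j(z)-1$ is a polynomial of degree at most $m-1$ vanishing at the $m$ distinct points $\lambda_1,\dots,\lambda_m$, it is identically $0$, and substituting $T$ gives $\sum_{j=1}^m P_j=I$; similarly $\sum_{j=1}^m\lambda_j\ell_j(z)-z$ has degree at most $m-1$ (for $m\geq 2$) and the same $m$ roots, so $T=\sum_{j=1}^m\lambda_jP_j$. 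For $i\neq j$ the index sets $\{k:k\neq i\}$ and $\{k:k\neq j\}$ together cover $\{1,\dots,m\}$, so $\bigl(\prod_{k\neq i}(T-\lambda_k I)\bigr)\bigl(\prod_{k\neq j}(T-\lambda_k I)\bigr)$ is a polynomial multiple of $\prod_{k=1}^m(T-\lambda_k I)=0$; dividing by $c_ic_j$ yields $P_iP_j=0$. Finally $P_j=P_jI=P_j\sum_{k=1}^m P_k=P_j^2$, so each $P_j$ is a projection, which completes (i).

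I do not expect a genuine obstacle here: everything takes place in the commutative algebra $\C[T]$ and reduces to identities between polynomials. The only points deserving explicit mention are that the interpolation formulas used in (ii)~$\Rightarrow$~(i) are identities of polynomials, hence may be evaluated at $T$ with no prior assumption that $T$ is ``semisimple'', and the trivial degenerate case $m=1$, where (ii) forces $T=\lambda_1 I$ and $P_1=I$.
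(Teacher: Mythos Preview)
Your argument is correct and complete. The Lagrange-interpolation approach you outline is the standard one, and both implications go through exactly as you describe; the induction for (i)~$\Rightarrow$~(ii), the polynomial identities $\sum_j\ell_j\equiv 1$ and $\sum_j\lambda_j\ell_j(z)\equiv z$ for (ii)~$\Rightarrow$~(i), and the observation that $\ell_i(z)\ell_j(z)$ is divisible by $\prod_k(z-\lambda_k)$ when $i\neq j$ are all valid, and your handling of the degenerate case $m=1$ is fine.

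As for comparison: the paper does not actually prove this proposition. It is stated there as a quotation of a result of Ili\v{s}evi\'c (Proposition~2.4 in the cited reference, with a parallel statement in Ili\v{s}evi\'c--Li--Poon), and no argument is supplied. So your write-up is not an alternative to the paper's proof but rather fills in what the paper leaves to the literature; it would serve perfectly well as a self-contained justification.
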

By Corollary \ref{sd} and Proposition \ref{ilis}, we have the following. 
\begin{corollary}
    Let $T\in B(X)$ be a power-bounded operator. Suppose that 
        the spectrum $\sigma(T)$ is a finite set $\{\lambda_1, \lambda_2, \dots, \lambda_m\}$. 
    Then $T$ is an algebraic operator such that 
    $\prod_{j=1}^m(T-\lambda_jI)=0$.
\end{corollary}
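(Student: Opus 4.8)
The plan is to derive this corollary as an immediate consequence of Corollary \ref{sd} and Proposition \ref{ilis}, so the proof will be short and essentially bookkeeping. First I would record that a power-bounded operator in the sense of this paper is by definition invertible and satisfies \eqref{spt}, so Corollary \ref{sd} applies verbatim to $T$: there exist projections $P_1,\dots,P_m$ on $X$ with $T=\sum_{j=1}^m\lambda_jP_j$, with $P_iP_j=0$ for $i\ne j$, and with $I=\sum_{j=1}^mP_j$. I would also note that the $\lambda_1,\dots,\lambda_m$ are automatically distinct, since $\sigma(T)$ is given as a set, so the hypotheses of Proposition \ref{ilis} on the scalars are met.

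Next I would observe that the data just produced is precisely condition (i) of Proposition \ref{ilis} (with the very same operators $T,P_1,\dots,P_m$ and scalars $\lambda_1,\dots,\lambda_m$). Invoking the equivalence (i)$\iff$(ii) of Proposition \ref{ilis}, I then obtain condition (ii), whose first half is exactly the identity $\prod_{j=1}^m(T-\lambda_jI)=0$. Since $T$ annihilates the (monic, nonzero) polynomial $p(z)=\prod_{j=1}^m(z-\lambda_j)$, it is by definition an algebraic operator, and the displayed polynomial relation is the one claimed.

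There is no substantive obstacle here; the only points that need a word of care are the trivial ones mentioned above, namely that "power-bounded" includes invertibility so that Corollary \ref{sd} is applicable, and that a set of spectral values consists of distinct points so that Proposition \ref{ilis} is applicable. One could alternatively bypass Proposition \ref{ilis} and verify $\prod_{j=1}^m(T-\lambda_jI)=0$ by hand from $T=\sum_j\lambda_jP_j$, $P_iP_j=\delta_{ij}P_i$, and $\sum_jP_j=I$: expanding the product and using orthogonality of the $P_j$ collapses it to $\sum_{j=1}^m\bigl(\prod_{i\ne j}(\lambda_j-\lambda_i)\bigr)\cdot 0\cdot P_j=0$, since each factor $T-\lambda_iI$ acts on $P_j(X)$ as multiplication by $\lambda_j-\lambda_i$ and the factor with $i=j$ kills it. I would, however, prefer the cleaner route through Proposition \ref{ilis}, as it also makes transparent that the $P_j$ are the Lagrange interpolation projections in $T$.
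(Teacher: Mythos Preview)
Your proposal is correct and follows exactly the route the paper takes: the paper simply states that the corollary follows by combining Corollary \ref{sd} with Proposition \ref{ilis}, and your argument spells out precisely this implication. The extra care you take (invertibility from the definition of power-bounded, distinctness of the $\lambda_j$) and the optional direct verification are sound but not needed beyond what the paper records.
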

\begin{example}
Let $\|\cdot\|$ be any norm on $\C^2$.
    Suppose that $A=\begin{bmatrix}
5 & -2 \\
12 & -5 \\
\end{bmatrix}$. 
We have
\[
\begin{bmatrix}
5 & -2 \\
12 & -5 \\
\end{bmatrix}
=
\begin{bmatrix}
1 & 1 \\
2 & 3 \\
\end{bmatrix}
\begin{bmatrix}
1 & 0 \\
0 & -1 \\
\end{bmatrix}
\begin{bmatrix}
1 & 1 \\
2 & 3 \\
\end{bmatrix}^{-1}.
\]
Hence, the matrix $A$ is power-bounded as an operator on $(\C^2,\|\cdot\|)$. We have the following.
The point spectrum (eigenvalues) of $A$ are 
$\pm 1$. The eigenspace $X_1$ of $A$ corresponding to $1$ is $\left\{r\begin{pmatrix}1\\2\end{pmatrix}\mid r\in \C\right\}$, and the eigenspace $X_{-1}$ corresponding to $-1$ is 
$\left\{r\begin{pmatrix}1\\3\end{pmatrix}\mid r\in \C\right\}$. 
Then $P=\begin{bmatrix}
3 & -1 \\
6 & -2 \\
\end{bmatrix}$
is a projection such that $P\C^2=X_1$, and 
$Q=\begin{bmatrix}
-2 & 1 \\
-6 & 3 \\
\end{bmatrix}$ is a projection such that $Q\C^2=X_{-1}$, and $PQ=QP=0$.
We have 
\[
A=1P+(-1)Q.
\]
\end{example}
\begin{example}
 Suppose that $A=\begin{bmatrix}
1 & 1 \\
0 & 1 \\
\end{bmatrix}$. Then $A$ is not power-bounded as an operator on $\C^2$ since $A^n=\begin{bmatrix}
1 & n \\
0 & 1 \\
\end{bmatrix}$ and $\sup_{n\in \Z}\|A^n\|=\infty$. 
The point spectrum (eigenvalue) is $1$ and the corresponding eigenspace for $1$ is $\left\{r\begin{pmatrix}1\\0\end{pmatrix}\mid r\in \C\right\}$. Furthermore,  
there is no projection $P$ such that $A=1P$. 
\end{example}

\subsection*{Acknowledgments}
The first-named author was supported by JSPS KAKENHI Grant Numbers JP24K06754.

\end{document}